\newtheorem{thm}{Theorem}[section]
\newtheorem{theorem}[thm]{Theorem}
\newtheorem{corollary}[thm]{Corollary}
\newtheorem{proposition}[thm]{Proposition}
\newtheorem{definition}[thm]{Definition}
\theoremstyle{remark}
\newtheorem{ex}[thm]{Example}
\newcommand{\RR}{\mathbb R}
\newcommand{\NN}{\mathbb N}
\newcommand{\HH}{\mathbb H}
\begin{document}

\title{The exact constant for the $\ell_1-\ell_2$ norm inequality}
\author[Botelho-Andrade, Casazza, Cheng, Tran
 ]{Sara Botelho-Andrade, Peter G. Casazza, Desai Cheng, and Tin Tran}
\address{Department of Mathematics, University
of Missouri, Columbia, MO 65211-4100}

\thanks{The authors were supported by
 NSF DMS 1609760;  and ARO  W911NF-16-1-0008}

\email{sandrade102087@gmail.com, Casazzap@missouri.edu, chengdesai@yahoo.com, tinmizzou@gmail.com}

\subjclass{42C15}

\begin{abstract}
A fundamental inequality for Hilbert spaces is the $\ell_1-\ell_2$-norm inequality which gives that for any $x \in \HH^n$,
$\|x\|_1\le \sqrt{n}\|x\|_2.$  But this is a strict inequality for all but vectors with constant modulus for their coefficients.
We will give a trivial method to compute, for each x, the constant $c$ for which $\|x\|_1=c\sqrt{n}\|x\|_2.$ 
Since this inequality is one of the most used results in Hilbert space
theory, we believe this
will have unlimited applications in the field.  We will also show some variations of this result.
\end{abstract}

\maketitle

\section{Introduction}

The $\ell_1-\ell_2$-norm inequality which gives that for any $x\in \HH^n$, $\|x\|_1\le \sqrt{n}\|x\|_2$.  
But this is a strict inequality for all but vectors with constant modulus for their coefficients.
We will give a trivial method to compute, for each x, the constant $c$ for which $\|x\|_1=c\sqrt{n}|x\|_2.$ 
Since this is one of the most fundamental and most used inequalities in Hilbert space theory,
we believe this will have broad application in the field.  We will also show some variations of this result.  For a background in this
area see \cite{CL,CK}.

\section{The $\ell_1-\ell_2$-norm Inequality}

We need a definition.

\begin{definition}
A vector of the form $x=\frac{1}{\sqrt{n}}(c_1,c_2, \ldots,c_n)\in \HH^n$, with $|c_i|=1$ for all $i=1,2,\ldots,n$
will be called a {\bf constant modulus vector}.
\end{definition}

\begin{theorem}\label{TT1}
Let $x=(a_1,a_2,\ldots,a_n)\in \HH^n$, a real or complex Hilbert space.  The following are equivalent:
\begin{enumerate}
\item We have
\[ \|x\|_1 = (1-\frac{c_x}{2})\sqrt{n}\|x\|_2.\]
\item We have
\[ \sum_{i=1}^n \left |\frac{|a_i|}{\|x\|_2}-\frac{1}{\sqrt{n}}\right |^2 = c_x.\]
\item The infimum of the distance from $\frac{x}{\|x\|_2}$ to the constant modulus vectors
is $\sqrt{c_x}$.
\end{enumerate}
In particular, 
\[ \|x\|_1 \le \sqrt{s}\|x\|_2,\]
if and only if
\[ \left ( 1-\frac{c_x}{2}\right ) \sqrt{n}\le \sqrt{s},\]
if and only if
\[ 1-\frac{c_x}{2}\le \sqrt{\frac{s}{n}}.\]
\end{theorem}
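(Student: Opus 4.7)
The plan is to normalize and reduce every condition to a computation involving the vector of normalized moduli. Set $y_i := |a_i|/\|x\|_2$, so that $\sum_{i=1}^n y_i^2 = 1$ and $\sum_{i=1}^n y_i = \|x\|_1/\|x\|_2$. Expanding the sum in condition (2) gives
\[ \sum_{i=1}^n \left(y_i - \frac{1}{\sqrt{n}}\right)^2 = \sum_{i=1}^n y_i^2 - \frac{2}{\sqrt{n}}\sum_{i=1}^n y_i + 1 = 2 - \frac{2\|x\|_1}{\sqrt{n}\,\|x\|_2}. \]
Setting this expression equal to $c_x$ and rearranging yields $\|x\|_1 = (1 - c_x/2)\sqrt{n}\,\|x\|_2$. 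Since every step is reversible, this gives (1) $\Leftrightarrow$ (2).

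For (2) $\Leftrightarrow$ (3), I would parametrize a general constant modulus vector as $u = \frac{1}{\sqrt{n}}(c_1,\ldots,c_n)$ with $|c_i|=1$, and compute
\[ \left\|\frac{x}{\|x\|_2} - u\right\|_2^2 = \sum_{i=1}^n \left|\frac{a_i}{\|x\|_2} - \frac{c_i}{\sqrt{n}}\right|^2. \]
The key scalar inequality is $|\alpha - \beta| \ge \bigl||\alpha|-|\beta|\bigr|$, with equality exactly when $\alpha$ and $\beta$ share a common phase. Choosing $c_i = a_i/|a_i|$ when $a_i \ne 0$ (and any unit scalar otherwise, which costs nothing because the corresponding term equals $1/n$ regardless of $c_i$) minimizes each summand independently and produces exactly the sum appearing in (2). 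Therefore the infimum of the squared distance equals $c_x$, and taking a square root gives (3); moreover the infimum is attained.

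For the final ``in particular'' block, I would substitute (1) into $\|x\|_1 \le \sqrt{s}\,\|x\|_2$, cancel the common factor $\|x\|_2 > 0$ (the case $x=0$ being trivial), and then divide by $\sqrt{n}$ to obtain the stated chain of equivalences. This is an immediate bookkeeping step once (1) is in hand.

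I expect no real obstacle: the entire argument is a single normalized expansion plus a pointwise minimization over phases. The only points warranting a brief remark are (a) reversibility of the completed-square computation, so that the implication from (1) to (2) works as cleanly as the forward direction, and (b) the handling of coordinates $a_i = 0$ in the infimum step, so that the argument is valid uniformly in the real and complex settings.
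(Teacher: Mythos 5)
Your proposal is correct and follows essentially the same route as the paper: expand the completed square to get $(1)\Leftrightarrow(2)$, and minimize the distance to constant modulus vectors coordinatewise by aligning phases (the paper maximizes $\mathrm{Re}\sum a_i\bar{c_i}$ where you invoke $|\alpha-\beta|\ge\bigl||\alpha|-|\beta|\bigr|$, but these are the same phase-alignment argument). Your explicit remarks on the $a_i=0$ coordinates and on reversibility are fine additions but do not change the substance.
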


\begin{proof}
$(1)\Leftrightarrow (2)$:
We compute:
\begin{align*}
\sum_{i=1}^n \left |\frac{|a_i|}{\|x\|_2}-\frac{1}{\sqrt{n}}\right |^2&= \frac{1}{\|x\|_2^2}\sum_{i=1}^n|a_i|^2
+ \sum_{i=1}^n\frac{1}{n}-\frac{2}{\sqrt{n}\|x\|_2}\sum_{i=1}^n|a_i|\\ 
&= 2\left ( 1-\frac{1}{\sqrt{n}\|x\|_2}\sum_{i=1}^n|a_i|\right )=c_x.
\end{align*}
if and only if
\[ \frac{1}{\sqrt{n}\|x\|_2}\sum_{i=1}^n|a_i|= 1-\frac{c_x}{2},\]
if and only if
\[ \sum_{i=1}^n|a_i|= (1-\frac{c_x}{2})\sqrt{n}\|x\|_2.\]
$(1)\Leftrightarrow (3)$:  We compute:
\[
\inf\left \{ \sum_{i=1}^{n}\left|\frac{a_i}{\|x\|_2}-\frac{c_i}{\sqrt{n}}\right |^2:
|c_i|=1\right \}=\]
\[
\inf\left \{\frac{1}{\|x\|^2_2}\sum_{i=1}^n|a_i|^2
+\sum_{i=1}^n\left |\frac{c_i}{\sqrt{n}}\right |^2-2\frac{1}{\|x\|_2\sqrt{n}}Re\sum_{i=1}^na_i\bar{c_i}: \vert c_i\vert=1
\right \}=\]
\[ 2-\frac{2}{\|x\|_2\sqrt{n}}\sum_{i=1}^n|a_i|.
\]
The equality occurs when $\frac{1}{\sqrt{n}}(c_1,c_2, \ldots,c_n)$ is a constant modulus vector with $c_i=\dfrac{a_i}{\vert a_i\vert}$ if $a_i\not=0$.

Thus
\[ c_x= 2-\frac{2}{\|x\|_2\sqrt{n}}\sum_{i=1}^n|a_i|\mbox{ if and
only if (1) holds} \]
\end{proof}

Now we want to look at an application of the above.
For this we need two preliminary results.  

\begin{theorem}\label{thm5}
Let $S$ be a subspace of $\HH^n$ and let $P$ be the orthogonal
projection on $S$.  For any $x\in \HH^n$, $\frac{Px}{\|Px\|}$
is the closest unit vector in $S$ to $x$.
\end{theorem}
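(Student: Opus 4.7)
The plan is to reduce the problem to maximizing $\mathrm{Re}\,\langle x,y\rangle$ over unit vectors $y\in S$, and then apply Cauchy--Schwarz inside $S$.

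First I would fix an arbitrary unit vector $y\in S$ and expand
\[ \|x-y\|^2 = \|x\|^2 - 2\,\mathrm{Re}\,\langle x,y\rangle + \|y\|^2 = \|x\|^2 + 1 - 2\,\mathrm{Re}\,\langle x,y\rangle. \]
Since $\|x\|$ is fixed, minimizing the left-hand side over unit $y\in S$ is equivalent to maximizing $\mathrm{Re}\,\langle x,y\rangle$ over the same set.

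Next, because $y\in S$ and $P$ is the orthogonal projection onto $S$, I would use the identity $\langle x,y\rangle = \langle Px,y\rangle + \langle (I-P)x,y\rangle = \langle Px,y\rangle$, so the problem is to maximize $\mathrm{Re}\,\langle Px,y\rangle$ subject to $\|y\|=1$ and $y\in S$. By Cauchy--Schwarz,
\[ \mathrm{Re}\,\langle Px,y\rangle \le |\langle Px,y\rangle| \le \|Px\|\,\|y\| = \|Px\|, \]
with equality precisely when $y$ is a nonnegative scalar multiple of $Px$; the unique unit-norm choice is $y = Px/\|Px\|$ (valid provided $Px\neq 0$; otherwise every unit $y\in S$ is equidistant from $x$ and the statement is vacuous).

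Finally, I would substitute $y = Px/\|Px\|$ back into the expansion to verify that it achieves the minimum value $\|x\|^2 + 1 - 2\|Px\|$, confirming it is the closest unit vector in $S$ to $x$. The argument is essentially a clean application of Cauchy--Schwarz; no real obstacle arises, only the degenerate case $Px=0$ deserves a brief mention.
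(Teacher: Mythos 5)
Your proof is correct and follows essentially the same route as the paper's: both reduce the problem to showing $\mathrm{Re}\,\langle x,y\rangle \le \|Px\|$ for unit $y\in S$ and then compare the expansions of $\|x-y\|^2$ and $\bigl\|x-\frac{Px}{\|Px\|}\bigr\|^2$. The only difference is cosmetic — the paper obtains the key inequality by extending $y$ to an orthonormal basis of $S$ (a Bessel-type bound), while you get it from $\langle x,y\rangle=\langle Px,y\rangle$ plus Cauchy--Schwarz, which also hands you the equality case and the degenerate case $Px=0$ that the paper leaves implicit.
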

\begin{proof}
	Let $y$ be a unit vector in $S$ and extend it to be an orthonormal basis $\{y, u_1, u_2, \ldots, u_k\}$ for $S$. Then 
	$$Px=\langle x, y\rangle y+\sum_{i=1}^{k}\langle x, u_i\rangle u_i.$$
	Hence
	$$\Vert Px\Vert^2=\vert \langle x, y\rangle\vert^2+\sum_{i=1}^{k}\vert\langle x, u_i\rangle\vert^2\geq \vert \langle x, y\rangle\vert^2.$$
	Therefore $$\Vert Px\Vert\geq \vert \langle x, y\rangle\vert \geq Re\langle x, y\rangle.$$
	Now we have
	$$\Vert x-\dfrac{Px}{\Vert Px\Vert}\Vert^2=\Vert x\Vert^2-2\Vert Px\Vert+1\leq \Vert x\Vert^2-2Re\langle x, y\rangle+\Vert y\Vert^2=\Vert x-y\Vert^2,$$ which is our claim.
\end{proof}

Next, we examine
the $\ell_1-\ell_2$-norm inequality for subspaces.

\begin{theorem}\label{thm2}
Let $S$ be a subspace of $\HH^n$ and let $P$
be the projection onto $S$.  The following are equivalent:
\begin{enumerate}
\item For every unit vector $x\in S$,
\[ \|x\|_1 \le (1-\frac{c}{2})\sqrt{n}.\]
\item The $\ell_2$ distance of any unit vector in S to any
constant modulus vector is greater than or equal to $\sqrt{c}$.
\item For every constant modulus vector $x$, we have
\[ \|Px\|_2\le 1-\frac{c}{2}.\]
\end{enumerate}
\end{theorem}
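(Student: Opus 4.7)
The plan is to establish $(1) \Leftrightarrow (2)$ using Theorem \ref{TT1} pointwise and $(2) \Leftrightarrow (3)$ using Theorem \ref{thm5} pointwise, so that the content reduces almost entirely to those two preceding results.

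For $(1)\Leftrightarrow (2)$, I would fix an arbitrary unit vector $x\in S$ and invoke Theorem \ref{TT1}: writing $c_x$ for the squared infimum distance from $x$ to the constant modulus vectors, we have $\|x\|_1=(1-c_x/2)\sqrt n$. Hence the inequality $\|x\|_1\le (1-c/2)\sqrt n$ is simply a restatement of $c_x\ge c$, which in turn says that the $\ell_2$ distance from $x$ to every constant modulus vector is at least $\sqrt c$. Since this argument is valid for every unit $x\in S$ separately, quantifying over such $x$ yields the equivalence of (1) and (2).

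For $(2)\Leftrightarrow (3)$, I would dualize: condition (2) is symmetric in a unit vector $u\in S$ and a constant modulus vector $y$ (it just bounds $\|u-y\|_2$ below), so it is equivalent to saying that for every constant modulus vector $y$ the distance from $y$ to the unit sphere of $S$ is at least $\sqrt c$. Fixing such a $y$, Theorem \ref{thm5} identifies the closest unit vector in $S$ (when $Py\ne 0$) as $Py/\|Py\|$. A one-line computation, using $\|y\|=1$ and $\langle y,Py\rangle=\|Py\|^2$, gives
\[
\left\|y-\frac{Py}{\|Py\|}\right\|_2^2 = 2-2\|Py\|_2,
\]
so this infimum distance is $\sqrt{2-2\|Py\|_2}$, which is $\ge\sqrt c$ if and only if $\|Py\|_2\le 1-c/2$. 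The degenerate case $Py=0$ is handled directly: every unit $u\in S$ is orthogonal to $y$, so $\|y-u\|_2^2=2$, and both sides of the desired equivalence reduce to the condition $c\le 2$.

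The only real bookkeeping is to make sure the quantifier over unit vectors in $S$ in (2) and the quantifier over constant modulus vectors in (3) are interchanged correctly, but this comes for free from the symmetry $\|u-y\|_2=\|y-u\|_2$. There is no genuine obstacle; the substance of the theorem has already been absorbed into Theorems \ref{TT1} and \ref{thm5}, and this result is the two-sided packaging of them.
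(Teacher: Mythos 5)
Your proposal is correct and follows essentially the same route as the paper's proof: $(1)\Leftrightarrow(2)$ reduces to the identity $\inf\{\|x-y\|_2^2 : y \text{ constant modulus}\} = 2 - \tfrac{2}{\sqrt n}\|x\|_1$ (which you import from Theorem \ref{TT1} rather than recomputing inline, as the paper does), and $(2)\Leftrightarrow(3)$ uses Theorem \ref{thm5} together with the computation $\left\|y-\frac{Py}{\|Py\|}\right\|_2^2 = 2-2\|Py\|_2$. Your explicit treatment of the degenerate case $Py=0$ is a small point the paper passes over silently, but it does not change the argument.
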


\begin{proof}
$(1)\Leftrightarrow (2)$:  Let $x=(a_1,a_2,\ldots,a_n)$.
\begin{align*}
 \inf\{\sum_{i=1}^n|a_i-\frac{c_i}{\sqrt{n}}|^2:\vert c_i\vert=1\}
&= \inf\{\sum_{i=1}^n|a_i|^2 + \sum_{i=1}^n\frac{1}{n}
-\frac{2}{\sqrt{n}}Re\sum_{i=1}^na_i\bar{c_i}: \vert c_i\vert=1 \}\\
&= 2-\frac{2}{\sqrt{n}}\sum_{i=1}^n|a_i|.
\end{align*}
Now, 
\[ c \le 2-\frac{2}{\sqrt{n}}\sum_{i=1}^n|a_i|\mbox{ if and only
if } \sum_{i=1}^n|a_i|\le \left ( 1-\frac{c}{2}\right )\sqrt{n}.\]

\vskip10pt
$(2) \Leftrightarrow (3)$:  By Theorem \ref{thm5}, we need to check
how close 
\[ \frac{Px}{\|Px\|} \mbox{ is to the all one's vector x.}\]
So we compute:
\begin{align*}
\left \|\frac{Px}{\|Px\|}-x\right \|^2= 2-\langle \dfrac{Px}{\Vert Px\Vert}, x\rangle - \langle x, \dfrac{Px}{\Vert Px\Vert}\rangle
= 2-2\|Px\|
\end{align*}
So, 
\[ c \le \left \|\frac{Px}{\|Px\|}-x\right \|^2\mbox{
if and only if }
\|Px\|\le 1-\frac{c}{2}.\]
\end{proof}

Now we have the second main result.  For this recall \cite{CL,CK}
that if $P$ is a projection on $\HH^n$ with orthonormal basis
$\{e_i\}_{i=1}^n$ then $\sum_{i=1}^n\|Pe_i\|^2=dim\ P(\HH^n)$.

\begin{theorem}\label{thm3}
Let $S$ be a s-dimensional subspace of $\RR^n$
with orthonormal basis $\{e_i\}_{i=1}^n$.  If
\[ \|y\|_1 \le \sqrt{s}\|y\|_2,\mbox{ for all }y\in S,\]
then there is an $I\subset [n]$ with $|I|=s$
and $S = span\ \{e_i\}_{i\in I}$.
\end{theorem}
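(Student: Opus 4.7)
The plan is to translate the $\ell_1$-$\ell_2$ hypothesis on $S$ into information about the orthogonal projection $P$ onto $S$ via Theorem~\ref{thm2}, and then extract rigidity from averaging over $\pm 1$ sign vectors.

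First I would match the hypothesis ``$\|y\|_1\le \sqrt{s}\|y\|_2$ for all $y\in S$'' to condition (1) of Theorem~\ref{thm2} by choosing the constant so that $1-\frac{c}{2}=\sqrt{s/n}$ (equivalently $c=2-2\sqrt{s/n}$). The equivalence (1)$\Leftrightarrow$(3) then yields $\|Px\|_2\le \sqrt{s/n}$ for every constant modulus vector $x$. In the real setting these are precisely the vectors $\epsilon/\sqrt{n}$ with $\epsilon\in\{-1,1\}^n$, so the inequality amounts to $\|P\epsilon\|_2^2\le s$ for every sign vector $\epsilon$.

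Next comes the averaging step. Expanding,
\[ \|P\epsilon\|_2^2 \;=\; \langle P\epsilon,\epsilon\rangle \;=\; \sum_{i,j=1}^n \langle Pe_i,e_j\rangle\,\epsilon_i\epsilon_j. \]
Taking the expectation over $\epsilon$ uniform on $\{-1,1\}^n$ kills the off-diagonal terms (since $\bE[\epsilon_i\epsilon_j]=0$ for $i\ne j$) and leaves $\sum_i\langle Pe_i,e_i\rangle=\tr P = s$ by the identity $\sum_i\|Pe_i\|^2=\dim S$ quoted before the theorem. Because $\|P\epsilon\|_2^2\le s$ holds pointwise while the average equals $s$, equality must hold for every sign vector.

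This forces $\sum_{i\ne j}\langle Pe_i,e_j\rangle\,\epsilon_i\epsilon_j=0$ for all $\epsilon\in\{-1,1\}^n$; multiplying by $\epsilon_{i_0}\epsilon_{j_0}$ and averaging picks off each coefficient, giving $\langle Pe_i,e_j\rangle=0$ for all $i\ne j$. Thus $P$ is diagonal in the basis $\{e_i\}$, and $P^2=P$ forces each diagonal entry into $\{0,1\}$. Setting $I=\{i:\langle Pe_i,e_i\rangle=1\}$ gives $|I|=\tr P=s$ and $S=P(\RR^n)=\mathrm{span}\{e_i\}_{i\in I}$. The only step that requires a genuine idea is the recognition that the sharp $\ell_1$-$\ell_2$ hypothesis is precisely a $\sqrt{s/n}$ ceiling on $\|P\epsilon\|_2$ for sign vectors $\epsilon$; once that is in place, the averaging argument together with the trace identity forces equality everywhere, and the rest is routine bookkeeping.
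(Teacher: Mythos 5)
Your proof is correct, and it shares its skeleton with the paper's: both arguments pass through Theorem~\ref{thm2} to convert the hypothesis into the bound $\|P\epsilon\|_2^2\le s$ for every sign vector $\epsilon\in\{-1,1\}^n$, and both lean on the Parseval-frame identity $\sum_{i=1}^n\|Pe_i\|^2=s$. Where you genuinely diverge is in how orthogonality of $\{Pe_i\}_{i=1}^n$ is extracted. The paper argues by contradiction: assuming some $\langle Pe_1,Pe_2\rangle\ne 0$, it greedily chooses unimodular signs $c_1,c_2,\ldots,c_n$ one at a time so that each cross term is nonnegative and at least one is strictly positive, producing a single constant modulus vector with $\|P(\frac{1}{\sqrt n}\sum c_ie_i)\|^2>\frac{s}{n}$, which violates the bound from Theorem~\ref{thm2}. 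You instead average $\|P\epsilon\|_2^2=\sum_{i,j}\langle Pe_i,e_j\rangle\epsilon_i\epsilon_j$ over all $2^n$ sign vectors, note the average equals $\tr P=s$ while each term is at most $s$, conclude equality everywhere, and then read off each off-diagonal entry $\langle Pe_i,e_j\rangle=0$ by a second averaging against $\epsilon_{i_0}\epsilon_{j_0}$. Your route buys a cleaner and fully explicit finish: it avoids the inductive sign-selection (where one must check at each stage that a sign can be chosen making $\mathrm{Re}\,c_k\langle \sum_{j<k}c_jPe_j,Pe_k\rangle\ge 0$), it lands directly on the statement that $P$ is a diagonal idempotent in the basis $\{e_i\}$ with $0/1$ entries, and it makes transparent that the hypothesis is an equality-on-average situation. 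The paper's construction, on the other hand, isolates a single witnessing constant modulus vector, which is slightly more in the spirit of its other results and would adapt to the complex unimodular setting with no change. Both are sound; there is no gap in your argument.
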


\begin{proof}
For any $y\in S$, let $c_y$ be defined in (2) of Theorem \ref{TT1}. Since \[ \|y\|_1 \le \sqrt{s}\|y\|_2,\mbox{ for all }y\in S,\] then 
\begin{equation*} 
1-\frac{c_y}{2}\le \sqrt{\frac{s}{n}}.
\end{equation*}
Set $$c=\inf\{c_y: y\in S\}$$ then 
\begin{equation}\label{E1}
1-\frac{c}{2}\le \sqrt{\frac{s}{n}}
\end{equation}
We will prove:  $\{Pe_i\}_{i=1}^n$ is an orthogonal set.  This will
impliy that there is an $I\subset [n]$ so that $Pe_i=e_i$ for 
$i\in I$ and $Pe_i=0$ for $i\in I^c$.

First note that $\{Pe_i\}_{i=1}^n$ is a Parseval frame for
$S$ and so
\[ \sum_{i=1}^n\|Pe_i\|^2 = s.\] 
Assume there are two of these vectors which are not orthogonal.
By reindexing, we will assume $Pe_1,Pe_2$ are not orthogonal.
Hence, by replacing $Pe_2$ by $c_2Pe_2$ with $|c_2|=1$ if necessary 
with $Re\ c_2\langle Pe_1,Pe_2\rangle>0$, we have
\[ \|Pe_1+c_2Pe_2\|^2 > \|Pe_1\|^2 + \|Pe_2\|^2.\]
Now, by replacing $Pe_3$ by $c_3Pe_3$ 
with $|c_3|=1$ if necessary, we have
\[ \|Pe_1+c_2Pe_2+c_3Pe_3\|^2 \ge \|Pe_1+Pe_2\|^2 + \|Pe_3\|^2
> \|Pe_1\|^2+\|Pe_2\|^2+\|Pe_3\|^3.\]
Continuing, and letting $c_1=1$, we have
\[ \|P\left ( \sum_{i=1}^nc_i e_i\right )\|^2 > \sum_{i=1}^n\|Pe_i\|^2=s. \]
It follows from Theorem \ref{thm2},
\[ \sqrt{\frac{s}{n}} < \left \| P\left (\frac{1}{\sqrt{n}} 
\sum_{i=1}^n c_i e_i \right ) \right \|\le 1-\frac{c}{2},
\]
which contradicts Equation (\ref{E1}) above.
\end{proof}

\section{An Application to $L_p[0,1]$}

It was pointed out to us by Bill Johnson that our work has application to Banach space theory.  That, in general, when
working with finite dimensional $\ell_p$, it is better to use the
$L_p[0,1]$ normalization.  But applying our results, the
{\it nasty $n^{1/2}$ goes away} and the expressions are independent
of dimension.  What is quite interesting here is the fact that
if $p<s$ and $f\in L_1[0,1]$ then we can measure {\it how peaky
f is} by seeing how small $\|f\|_p$ is. What apparently was
not realized is that when $p=1$ and $s=2$ we get a nice equality
instead of an inquality.

\begin{theorem}
Let $f\ge 0$ be norm one in $L_2[0,1]$.  The following are
equivalent:
\begin{enumerate}
\item We have
\[ \|f\|_1 = (1-\frac{c}{2}).\]
\item We have
\[ \|f-1\|_2^2=c.\]
\end{enumerate}
\end{theorem}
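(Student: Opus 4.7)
The plan is to mimic the $(1)\Leftrightarrow(2)$ computation in Theorem \ref{TT1}, which in the discrete case reduced everything to expanding a single squared $\ell_2$-norm. The continuous version here is even cleaner because we no longer have to carry around the normalizing factor $1/\sqrt{n}$ (the constant function $\mathbf{1}$ already has $L_2$-norm $1$), which is exactly the point the authors make in the prose preceding the statement.

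Concretely, I would just expand $\|f-1\|_2^2$ directly:
\begin{align*}
\|f-1\|_2^2 &= \int_0^1 (f(t)-1)^2\,dt = \int_0^1 f(t)^2\,dt - 2\int_0^1 f(t)\,dt + \int_0^1 1\,dt.
\end{align*}
Since $f \geq 0$, the middle term is exactly $\|f\|_1$, while the first and third terms equal $\|f\|_2^2 = 1$ and $1$ respectively. Hence
\[
\|f-1\|_2^2 = 2 - 2\|f\|_1.
\]
From this single identity both implications fall out: $\|f-1\|_2^2 = c$ if and only if $\|f\|_1 = 1 - c/2$.

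There is essentially no obstacle, since the hypothesis $f \geq 0$ removes the need to take absolute values (the analog of the step $|a_i|$ in Theorem \ref{TT1}), and the normalization $\|f\|_2 = 1$ plays the role of the constant-modulus scaling on the discrete side. The only thing worth flagging explicitly in the write-up is that the constant function $\mathbf{1}$ is the $L_2[0,1]$ analog of the constant modulus vector $\tfrac{1}{\sqrt{n}}(1,\dots,1)$, so that (2) is literally measuring how far the normalized $f$ sits from the canonical constant modulus element; this parallels condition (3) of Theorem \ref{TT1} and makes the result a genuine reformulation rather than a coincidence of bookkeeping.
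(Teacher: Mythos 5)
Your proof is correct and is essentially the paper's argument: both reduce the equivalence to the single identity $\|f-1\|_2^2 = 2 - 2\|f\|_1$, obtained by expanding a squared $L_2$-norm and using $\|f\|_2=1$ together with $f\ge 0$ to identify $\int_0^1 f$ with $\|f\|_1$. The only cosmetic difference is that the paper routes the computation through the parallelogram law (expanding $\|f+1\|_2^2$ instead of $\|f-1\|_2^2$ directly), which yields the same identity with no additional content.
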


\begin{proof}
We use the parallelogram law:
\begin{align*}
4 &= \|f-1\|_2^2 +\|f+1\|_2^2\\
&= \|f-1\|_2^2+ 
 \|f\|_2^2+1+2\int_0^1 f(t)dt\\
 &= \|f-1\|_2^2+2+2\|f\|_1.
 \end{align*}
 I.e.
 \[ \|f-1\|_2^2= 2-2\|f\|_1.\]
 It follows that
 \[ \|f-1\|_2^2 = c \mbox{ if and only if }\|f\|_1=1-\frac{c}{2}.\]
\end{proof}

\end{document}